\newtheorem{theorem}{Theorem}
\newtheorem{lemma}{Lemma}
\begin{document}
\author{D. Baramidze$^{1}$,  L.-E. Persson$^{2,3}$, H. Singh$^{2}$, G. Tephnadze$^{1}$}

\title[logarithmic means \dots]{Some new results for subsequences of Nörlund logarithmic means of Walsh-Fourier series }
\address{D. Baramidze, The University of Georgia, School of science and technology, 77a Merab Kostava St, Tbilisi 0128, Georgia and Department of Computer Science and Computational Engineering, UiT - The Arctic University of Norway, P.O. Box 385, N-8505, Narvik, Norway.}
\email{Datobaramidze20@gmail.com }
\address{L.-E. Persson, UiT The Arctic University of Norway, P.O. Box 385, N-8505, Narvik, Norway and Department of Mathematics and Computer Science, Karlstad University, 65188 Karlstad, Sweden.}
\email{larserik6pers@gmail.com}
\address{H. Singh, Department of Computer Science and Computational Engineering, UiT - The Arctic University of Norway, P.O. Box 385, N-8505, Narvik, Norway.}
\email{harpal.singh@uit.no }
\address{G. Tephnadze, The University of Georgia, School of science and technology, 77a Merab Kostava St, Tbilisi 0128, Georgia}
\email{g.tephnadze@ug.edu.ge}
\date{}
\maketitle

$^{1}$ The University of Georgia, School of science and technology, 77a Merab Kostava St, Tbilisi 0128, Georgia.

$^{2}$ UiT The Arctic University of Norway, P.O. Box 385, N-8505, Narvik, Norway.

$^{3}$ Department of Mathematics and Computer Science, Karlstad University, 65188 Karlstad, Sweden.

\begin{abstract}
We prove that there exists a martingale $f\in H_p $ such that the
subsequence $\{L_{2^n}f \}$ of Nörlund logarithmic means with respect to the Walsh
system are not bounded in the Lebesgue space $weak-L_p $ for $0<p<1 $. Moreover, we  prove that for any $f\in L_p(G),$  $p\geq 1, $  $L_{2^n}f$ converge to $f$ at any Lebesgue point $x$. Some new related inequalities are derived.
\end{abstract}

\date{}

\textbf{2000 Mathematics Subject Classification.} 26015, 42C10, 42B30.

\textbf{Key words and phrases:} Walsh system, logarithmic means, partial
sums, Fej\'{e}r means, martingale Hardy space, convergence, divergence, inequalities.

\section{Introduction}

The terminology and notations used in this introduction can be found in Section 2. 

It is well-known that Vilenkin systems do not form bases in the
space $L_{1}.$ Moreover, there is a function in the Hardy space $H_{1},$
such that the partial sums of $f$ \ are not bounded in the $L_{1}$-norm. Moreover, (see Tephnadze \cite{tep7}) there
exists a martingale $f\in H_{p}$ $\left( 0<p<1\right) ,$ such that
\begin{equation*}
\underset{n\in \mathbb{N}}{\sup }\left\Vert S_{2^n+1}f\right\Vert
_{weak-L_{p}}=\infty .
\end{equation*}%
The reason of the divergence of $S_{2^n+1}f$ \ is that when $0<p<1$ the
Fourier coefficients of $f\in H_{p}$ are not uniformly bounded (see
Tephnadze \cite{tep6}).

On the other hand, (for details see e.g. the books \ \cite{sws}
and \cite{We1}) the subsequence $\{S_{2^n}\}$ of partial
sums is bounded from the martingale Hardy space $H_{p}$ to the
space $H_{p},$ for all $p>0,$ that is the following inequality holds:
\begin{equation}\label{snjemala}
\left\Vert S_{2^n}f\right\Vert _{H_p}\leq c_{p}\left\Vert f\right\Vert _{H_p}, \ \ n\in \mathbb{N}, \ \ p>0.
\end{equation}

It is also well-known that (see \cite{sws})
\begin{equation}\label{s2nae}
S_{2^n}f(x)\to f(x),  \ \text{for all Lebesgue points of} \  f\in L_p(G), \ \text{where} \  p\geq 1.
\end{equation}

Weisz \cite{We2} considered the norm convergence of Fej\'er means of
Vilenkin-Fourier series and proved that the inequality
\begin{equation}
\left\Vert \sigma _{k}f\right\Vert _{p}\leq c_{p}\left\Vert f\right\Vert
_{H_{p}},\text{ \ \ \ }p>1/2\text{ \ \ \ and \ \ \ }f\in H_{p},  \label{f100}
\end{equation}
holds. Moreover, Goginava \cite{gog1} (see also \cite{PTT} and \cite{tep1}) proved that the assumption $p>1/2$ in (\ref%
{f100}) is essential. In particular, he showed that there exists a
martingale $f\in H_{1/2}$ such that
$
\sup_{n\in \mathbb{N}}\left\Vert \sigma _{n}f\right\Vert _{1/2}=+\infty .
$
However, Weisz \cite{We2} (see also \cite{pt}) proved that for every $ f\in H_p, $ there exists an absolute constant $ c_p, $ such that the following inequality holds:
\begin{equation} \label{sigmanjemala}
\left\Vert \sigma_{2^n}f\right\Vert _{H_p}\leq c_{p}\left\Vert f\right\Vert _{H_p}, \ \ n\in \mathbb{N}, \ \ p>0.
\end{equation}

M\'oricz and Siddiqi \cite{Mor} investigated the approximation properties of some special N\"orlund means of Walsh-Fourier series of $L_{p}$ functions in
norm. Similar results for the two-dimensional case  can be found in Nagy \cite{n,nagy}.  Approximation properties for general summability methods can be found in \cite{BNT,BN}. Fridli, Manchanda and Siddiqi \cite{FMS} improved and extended the results of M\'oricz and Siddiqi \cite{Mor} to martingale Hardy spaces.
The case when $\left\{ q_{k}=1/k:k\in \mathbb{N}\right\} $ was
excluded, since the methods are not applicable to N\"o%
rlund logarithmic means. In \cite{Ga2} G\'{a}t and Goginava proved some
convergence and divergence properties of the N\"orlund logarithmic means of
functions in the Lebesgue space $%
L_1.$ In particular, they proved that there exists an function in the space $ L_1, $ such that 
$
\sup_{n\in \mathbb{N}}\left\Vert L_{n}f\right\Vert _{1}=\infty .
$

In \cite{PTW} (see also \cite{BPT}) it was proved that there exists a martingale $f\in H_{p}, \ \ (0<p< 1)$ such that
$
\sup_{n\in \mathbb{N}}\left\| L_{n}f\right\| _{p}=\infty .
$

In \cite{PTW2} (see also \cite{TT1}) it was proved that there exists a martingale $f\in H_{1}$ such that 
\begin{equation} \label{main}
\sup\limits_{n\in \mathbb{N}}\left\Vert L_{n}f\right\Vert_{1}=\infty.
\end{equation}

However, Goginava \cite{gog2} proved that 
\begin{equation*}
\left\Vert L_{2^n}f\right\Vert _{1}\leq c\left\Vert f\right\Vert _{1}, \ \ f\in L_1, \ \ n\in \mathbb{N}.
\end{equation*}

From this result it immediately follows that for every $ f\in H_1, $ there exists an absolute constant $ c, $ such that the inequality
\begin{equation} \label{gogjemala}
\left\Vert L_{2^n}f\right\Vert _{1}\leq c\left\Vert f\right\Vert _{H_1}
\end{equation}
holds for all $n\in \mathbb{N}.$
Goginava \cite{gog2} also proved that for any $f\in L_1(G),$ 
\begin{equation*}
L_{2^n}f(x)\to f(x), \ \ \ \text{a.e., as} \ \ \  n\to\infty.
\end{equation*}

According to \eqref{snjemala}, \eqref{sigmanjemala} and \eqref{gogjemala}, the following question is quite natural: 

\textbf{Question 1.} Is the subsequence  $ \{L_{2^n} \}$ also bounded on the martingale Hardy spaces $H_p(G)$ when $ 0<p<1 ?$

In Theorem 2 of this paper we give a negative answer to this question. In particular, we further develop some methods considered in \cite{BPTW,LPTT} and prove that for any $0<p<1,$ there exists a martingale $f\in H_{p}$ such that 
$
\sup_{n\in \mathbb{N}} \left\Vert L_{2^n}f\right\Vert_{weak-L_p}=\infty.
$
Moreover, in our Theorem 1 we generalize the result of Goginava \cite{gog2} and prove that for any $f\in L_1(G)$ and for any Lebesgue point $x$,
\begin{equation*}
L_{2^n}f(x)\to f(x), \ \ \ \text{as} \ \ \  n\to\infty.
\end{equation*}

The main results in this paper are presented and proved in Section 4. Section 3 is used to present some auxiliary lemmas, where, in particular, Lemma 2 is new and of independent interest. In order not to disturb our discussions later on some definitions and notations are given in Section 4. Finally, Section 5 is reserved for some open questions we hope can be a source of inspiration for further research in this interesting area.

\section{Definitions and Notations}

\bigskip Let $\mathbb{N}_{+}$ denote the set of the positive integers, $%
\mathbb{N}:=\mathbb{N}_{+}\cup \{0\}.$ Denote by $Z_{2}$ the discrete cyclic
group of order 2, that is $Z_{2}:=\{0,1\},$ where the group operation is the
modulo 2 addition and every subset is open. The Haar measure on $Z_2$ is
given so that the measure of a singleton is 1/2.

Define the group $G$ as the complete direct product of the group $Z_{2},$
with the product of the discrete topologies of $Z_{2}$`s. The elements of $G$
are represented by sequences $x:=(x_{0},x_{1},...,x_{j},...),$ where $%
x_{k}=0\vee 1.$

It is easy to give a base for the neighborhood of $x\in G$ namely:
\begin{equation*}
I_{0}\left( x\right) :=G,\text{ \ }I_{n}(x):=\{y\in
G:y_{0}=x_{0},...,y_{n-1}=x_{n-1}\}\text{ }(n\in \mathbb{N}).
\end{equation*}

Denote $I_{n}:=I_{n}\left( 0\right) ,$ $\overline{I_{n}}:=G$ $\backslash $ $%
I_{n}$ and $e_{n}:=\left( 0,...,0,x_{n}=1,0,...\right) \in G,$ for $n\in 
\mathbb{N}$. It is easy to show that
$
\overline{I_{M}}=\overset{M-1}{\underset{s=0}{\bigcup }}I_{s}\backslash
I_{s+1}.  $

If $n\in \mathbb{N},$ then every $n$ can be uniquely expressed as $%
n=\sum_{k=0}^{\infty }n_{j}2^{j},$ where $n_{j}\in Z_{2}$ $~(j\in \mathbb{N}%
) $ and only a finite numbers of $n_{j}$ differ from zero. Let $%
\left\vert n\right\vert :=\max \{k\in \mathbb{N}:\ n_{k}\neq 0\}.$

The norms (or quasi-norms) of the spaces $L_{p}(G)$ and $weak-L_{p}\left(
G\right) ,$ $\left( 0<p<\infty \right) $ are, respectively, defined by 
\begin{equation*}
\left\Vert f\right\Vert _{p}^{p}:=\int_{G}\left\vert f\right\vert ^{p}d\mu
,\ \ \ \ \left\Vert f\right\Vert_{weak-L_{p}}^{p}:=\sup_{\lambda
>0}\lambda ^{p}\mu \left( f>\lambda \right) .
\end{equation*}

The $k$-th Rademacher function is defined by%
\begin{equation*}
r_{k}\left( x\right) :=\left( -1\right) ^{x_{k}}\text{\qquad }\left( \text{ }%
x\in G,\text{ }k\in \mathbb{N}\right) .
\end{equation*}

Now, define the Walsh system $w:=(w_{n}:n\in \mathbb{N})$ on $G$ as: 
\begin{equation*}
w_{n}(x):=\overset{\infty }{\underset{k=0}{\Pi }}r_{k}^{n_{k}}\left(
x\right) =r_{\left\vert n\right\vert }\left( x\right) \left( -1\right) ^{%
\underset{k=0}{\overset{\left\vert n\right\vert -1}{\sum }}n_{k}x_{k}}\text{%
\qquad }\left( n\in \mathbb{N}\right) .
\end{equation*}

It is well-known that  (see
e.g. \cite{sws})
\begin{eqnarray}\label{vilenkin}
w_n\left( x+y\right) &=&w_n\left( x\right)w_n\left( y\right).
\end{eqnarray}

The Walsh system is orthonormal and complete in $L_{2}\left( G\right) $ (see
e.g. \cite{sws}).

If $f\in L_{1}\left( G\right) $ we can establish  Fourier coefficients,
 partial sums of the Fourier series,  Dirichlet kernels with respect
to the Walsh system in the usual manner:
\begin{equation*}
\widehat{f}\left( k\right) :=\int_{G}fw_{k}d\mu \,\,\,\,\left( k\in \mathbb{N
}\right) ,
\end{equation*}%
\begin{equation*}
S_{n}f:=\sum_{k=0}^{n-1}\widehat{f}\left( k\right) w_{k},\ \
D_{n}:=\sum_{k=0}^{n-1}w_{k\text{ }}\,\,\,\left( n\in \mathbb{N}_{+}\right) .
\end{equation*}

Recall that (for details see e.g. \cite{AVD})
\begin{equation}
D_{2^{n}}\left( x\right) =\left\{ 
\begin{array}{ll}
2^{n}, & \,\text{if\thinspace \thinspace \thinspace }x\in I_{n} \\ 
0, & \text{if}\,\,x\notin I_{n}%
\end{array}%
\right.  \label{1dn}
\end{equation}%
and
\begin{equation}
D_{n}=w_{n}\overset{\infty }{\underset{k=0}{\sum }}n_{k}r_{k}D_{2^{k}}=w_{n}%
\overset{\infty }{\underset{k=0}{\sum }}n_{k}\left(
D_{2^{k+1}}-D_{2^{k}}\right) ,\text{ for \ }n=\overset{\infty }{\underset{i=0%
}{\sum }}n_{i}2^{i}.  \label{2dn}
\end{equation}

Let $\left\{ q_{k}:k\geq 0\right\} $ be a sequence of nonnegative numbers.
The Nörlund means for the Fourier series of $f$ are defined by 
\begin{equation*}
\frac{1}{{l_{n}}}\sum_{k=0}^{n}q_{n-k}S_{k}f.
\end{equation*}

In the special case when $\{q_{k}=1:k\in \mathbb{N}\},$ we get the Fej\'er means
\begin{equation*}
\sigma _{n}f:=\frac{1}{n}\sum_{k=1}^{n}S_{k}f\,.
\end{equation*}%
$\,$

If $q_{k}={1}/{(k+1)}$, then we get the Nörlund logarithmic means: 
\begin{equation}
L_{n}f:=\frac{1}{l_{n}}\sum_{k=0}^{n-1}\frac{S_{k}f}{n-k}, \ \ \ \ l_{n}:=\sum_{k=1}^{n}%
\frac{1}{k}.  \label{L_n}
\end{equation}

The $n$-th Riesz logarithmic mean of the Fourier
series of the integrable function $f$ is defined by 
\begin{equation*}
R_{n}f:=\frac{1}{l_{n}}\sum_{k=1}^{n}\frac{S_{k}f}{k}, \ \ l_{n}:=\sum_{k=1}^{n}%
\frac{1}{k},
\end{equation*}%
We note that it is an  inverse of the Nörlund logarithmic means.

The convolution of two functions
$f,g\in L_{1}(G)$ is defined by
\begin{equation*}
\left( f\ast g\right) \left( x\right) :=\int_{G}f\left( x+t\right)
g\left( t\right) d\mu (t)\text{ \ \ }\left( x\in G\right).
\end{equation*}

It is well-known that if $f\in L_{p}\left( G\right) ,$ $g\in
L_{1}\left(  G\right) $ and $1\leq p<\infty .$ Then $f\ast g\in
L_{p}\left(  G\right) $ and the corresponding inequality holds:
\begin{equation} \label{covstrongaaaj}
\left\Vert f\ast g\right\Vert _{p}\leq \left\Vert f\right\Vert_{p}\left\Vert g\right\Vert _{1}.
\end{equation}

The representations
\begin{equation*}
L_nf\left(x\right)=\underset{G}{\int}f\left(t\right)P_n\left(x+t\right) d\mu\left(t\right)
\ \ \text{and} \ \
R_nf\left(x\right)=\underset{G}{\int}f\left(t\right)Y_n\left(x+t\right) d\mu\left(t\right)
\end{equation*}
for $n\in \mathbb{N}$ play a central role in the sequel, where 
\begin{equation*}
P_n:=\frac{1}{Q_n}\overset{n}{\underset{k=1}{\sum }}q_{n-k}D_k
\ \ \text{and} \ \
Y_n:=\frac{1}{Q_n}\overset{n}{\underset{k=1}{\sum }}q_{k}D_k
\end{equation*}
are called the kernels of the N\"orlund  logaritmic and the Reisz means, respectively.

It is well-known that (see e.g. Goginava \cite{gog2} and Tephnadze \cite{tepthesis}):
\begin{eqnarray} \label{1.71alphaT2j} P_{2^n}(x)=D_{2^n}(x)-\psi_{2^n-1}(x){Y}_{2^n}(x).
\end{eqnarray}
Moreover, for all $n\in \mathbb{N},$
\begin{eqnarray} \label{1.71alphaT2jj} 
\Vert P_{2^n}\Vert_1<c<\infty \ \ \ \text{and} \ \ \
\Vert Y_{n}\Vert_1<c<\infty.
\end{eqnarray}

In the case $f\in L_{1}(G)$ the maximal functions are given by  
\begin{equation*}
M(f)(x)=\sup_{n\in\mathbb{N}}\frac{1}{\left\vert I_{n}\left( x\right)
	\right\vert }\left\vert \int_{I_{n}\left( x\right) }f\left( u\right)d \mu
\left( u\right) \right\vert =\sup_{n\in\mathbb{N}} 2^n \left\vert \int_{I_{n}\left( x\right) }f\left( u\right) d\mu
\left( u\right) \right\vert .
\end{equation*}

It is well-known (for details see e.g. \cite{sws} and \cite{Tor1}) that if $f\in L_1(G),$ then the inequality
\begin{equation*}
\Vert M(f)\Vert_{\text{weak}-L_1}\leq \Vert f\Vert_{1}.
\end{equation*}
holds. 
According to a density argument of Calderon-Zygmund (see \cite{Tor1}) we obtain that if  $f\in L_{1}\left( G\right),$ then 
\begin{equation*}\label{a.e.maxfunct}
2^n \left\vert \int_{I_{n}\left( x\right) }f\left( u\right) d\mu
\left( u\right) \right\vert \to 0, \ \ \text{as} \ \ n\to \infty.
\end{equation*}

A point $x$ on the Walsh group is called a Lebesgue point of   $f\in L_{1}\left( G\right),$ if

\[\lim_{n\rightarrow \infty }2^n\int_{I_{n}(x)}f\left(t\right) d\mu(t)=f\left( x\right) \ \ \ \ \ a.e.\ \ \ x\in G.\]

According to \eqref{a.e.maxfunct} we find that if  $f\in L_{1}\left( G\right),$ then a.e point is a Lebesgue point.

The $\sigma $-algebra generated by the intervals $\left\{ I_{n}\left(
x\right) :x\in G\right\} $ is denoted by $\digamma _{n}\left( n\in 
\mathbb{N}\right) .$ 
Let $f:=\left( f^{\left( n\right) },n\in \mathbb{N}%
\right) $ be a martingale with respect to $\digamma _{n}\left( n\in \mathbb{N%
}\right) $ (for details see e.g. \cite{We1}).

The maximal function of a martingale $f$ \ is defined by 
\begin{equation*}
f^{\ast }:=\sup_{n\in \mathbb{N}}\left\vert f^{(n)}\right\vert .
\end{equation*}

For $0<p<\infty $ the Hardy martingale spaces $H_{p}\left( G\right) $
consist of all martingales for which
\begin{equation*}
\left\| f\right\| _{H_{p}}:=\left\| f^{*}\right\|_{p}<\infty .
\end{equation*}

If $f\in L_{1}\left( G\right),$ then it is easy to show that the sequence $F=
\left( S_{2^{n}}f :n\in \mathbb{N}\right) $ is a martingale and $F^*=M(f).$

If $f=\left( f^{\left( n\right) },\text{ }n\in \mathbb{N}\right) $ is a
martingale, then the Walsh-Fourier coefficients must be defined in a slightly
different manner:
\begin{equation*}
\widehat{f}\left( i\right) :=\lim_{k\rightarrow \infty }\int_{G}f^{\left(
k\right) }\left( x\right)w_{i}\left( x\right) d\mu \left( x\right) .
\end{equation*}

The Walsh-Fourier coefficients of $f\in L_{1}\left( G\right) $ are the same
as those of the martingale $\left( S_{2^n}f :n\in \mathbb{N}%
\right) $ obtained from $f$.

A bounded measurable function $a$ is a $p$-atom if there exists an interval $I$
such that \qquad 
\begin{equation*}
\int_{I}ad\mu =0,\ \left\Vert a\right\Vert _{\infty }\leq \mu \left(
I\right) ^{-1/p} ,\text{ \ supp}\left( a\right) \subset I.
\end{equation*}

\section{Auxiliary Results}

The Hardy martingale space $H_{p}\left( G\right) $ has an atomic
characterization (see Weisz \cite{We1}, \cite{We3}):

\begin{lemma} \label{Weisz}
A martingale $f=\left( f^{\left( n\right) },\ n\in \mathbb{N}
\right) $ is in $H_{p}\left( 0<p\leq 1\right) $ if and only if there exist a
sequence $\left( a_{k},k\in \mathbb{N}\right) $ of p-atoms and a sequence $%
\left( \mu _{k},k\in\mathbb{N}\right) $ of real numbers such that for
every $n\in \mathbb{N}:$

\begin{equation}\label{1}
\underset{k=0}{\overset{\infty }{\sum }}\mu _{k}S_{2^{n}}a_{k}=f^{\left(
n\right) },   \ \ \ \ \text{where} \ \ \ \ 
\sum_{k=0}^{\infty }\left| \mu _{k}\right| ^{p}<\infty . 
\end{equation}

Moreover,
\begin{equation*}
\left\| f\right\| _{H_{p}}\backsim \inf \left( \sum_{k=0}^{\infty }\left|
\mu _{k}\right| ^{p}\right) ^{1/p},
\end{equation*}
where the infimum is taken over all decompositions of $f$ of the form (\ref{1}).
\end{lemma}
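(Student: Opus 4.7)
The plan is to prove Lemma \ref{Weisz} in the standard Weisz style by establishing the two implications separately. The key observation, which simplifies both directions, is that on $G$ the partial sum $S_{2^n}$ coincides with the conditional expectation $E(\cdot\mid\digamma_n)$: indeed \eqref{1dn} gives $D_{2^n}=2^n\mathbf{1}_{I_n}$, so $S_{2^n}f(x)=2^n\int_{I_n(x)}f\,d\mu$, and thus every martingale $(f^{(n)})\in H_p$ is a genuine $\digamma_n$-martingale in the usual probabilistic sense.

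\emph{Sufficiency.} Assume $f^{(n)}=\sum_k\mu_k S_{2^n}a_k$ with $\sum_k|\mu_k|^p<\infty$. Since $0<p\le 1$, the $p$-power of the quasi-norm is subadditive, so it suffices to show $\|a\|_{H_p}\le 1$ for every $p$-atom $a$. If $a$ is supported on $I\in\digamma_N$ with $\|a\|_\infty\le\mu(I)^{-1/p}$ and $\int_I a\,d\mu=0$, then for $n\le N$ the zero-mean condition forces $S_{2^n}a=E(a\mid\digamma_n)=0$, while for $n\ge N$ the function $a$ is already $\digamma_n$-measurable and $S_{2^n}a=a$. Hence $a^{\ast}=|a|\le\mu(I)^{-1/p}\mathbf{1}_I$, which gives $\|a^{\ast}\|_p^p\le 1$.

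\emph{Necessity.} Given $f\in H_p$, I would construct the decomposition via stopping times. With $f_n^{\ast}:=\max_{m\le n}|f^{(m)}|$, set
\[
\tau_k:=\inf\{n\in\mathbb{N}:f_n^{\ast}>2^k\},\qquad k\in\mathbb{Z},
\]
so that $\{\tau_k<\infty\}=\{f^{\ast}>2^k\}$ is a disjoint union of dyadic intervals. Put
\[
\mu_k:=3\cdot 2^k\,\mu(\tau_k<\infty)^{1/p},\qquad a_k:=\mu_k^{-1}\sum_{n=0}^{\infty}\bigl(f^{(n+1)}-f^{(n)}\bigr)\mathbf{1}_{\{\tau_k\le n<\tau_{k+1}\}}.
\]
Telescoping then yields $f^{(n)}=\sum_k\mu_k S_{2^n}a_k$, and the layer-cake identity
\[
\sum_k|\mu_k|^p=3^p\sum_k 2^{kp}\mu(f^{\ast}>2^k)\asymp\|f^{\ast}\|_p^p=\|f\|_{H_p}^p
\]
delivers both the summability $\sum|\mu_k|^p<\infty$ and the norm equivalence stated in the lemma.

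\emph{Main obstacle.} The delicate step is verifying that each $a_k$ is a genuine $p$-atom. Its support lies in $\{\tau_k<\infty\}$, which is measurable with respect to the stopped $\sigma$-algebra $\digamma_{\tau_k}$, so both the dyadic support and the zero-mean condition are handled by standard stopping-time arguments. The $L_{\infty}$ bound $\|a_k\|_{\infty}\le\mu(\tau_k<\infty)^{-1/p}$ is the sharpest point: on $\{\tau_k\le n<\tau_{k+1}\}$ one has $|f^{(n)}|\le 2^{k+1}$, but the martingale increment produced at the jump $\tau_{k+1}$ can enlarge this slightly, and it is precisely the factor $3$ in $\mu_k$ that absorbs this jump so that the atom inequality is clean.
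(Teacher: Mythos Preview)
The paper does not prove this lemma; it is quoted from Weisz \cite{We1,We3} and used as a black box. Your outline is exactly the standard stopping-time proof from Weisz's monograph, so in spirit there is nothing to compare.

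Two points in your write-up need repair, however. In the sufficiency direction, a $p$-atom as defined here is merely bounded and measurable, not $\digamma_N$-measurable, so for $n>N$ one does \emph{not} have $S_{2^n}a=a$, and the identity $a^{\ast}=|a|$ fails in general. What is true, and sufficient, is $|S_{2^n}a|\le\|a\|_{\infty}\mathbf{1}_I$, which follows from $L_\infty$-contractivity of the conditional expectation together with the support condition; this still yields $\|a^{\ast}\|_p\le 1$. In the necessity direction, your $a_k$ is supported on $\{\tau_k<\infty\}$, which is a disjoint union of maximal dyadic intervals rather than a single interval $I$, so it is not a $p$-atom in the sense of the paper. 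The standard fix is to write $\{\tau_k<\infty\}=\bigcup_j I_{k,j}$ as that disjoint union, set $\mu_{k,j}:=3\cdot 2^k\mu(I_{k,j})^{1/p}$ and let $a_{k,j}$ be $\mu_{k,j}^{-1}$ times the restriction of the stopped increment to $I_{k,j}$; then each $a_{k,j}$ is a genuine $p$-atom and $\sum_{k,j}|\mu_{k,j}|^p=3^p\sum_k 2^{kp}\mu(\tau_k<\infty)$, so the norm computation is unchanged.
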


We also state and prove the following new lemma of independent interest:
\begin{lemma}
Let $n\in \mathbb{N}$  and $x\in I_{2}(e_{0}+e_{1})\in I_{0}\backslash I_{1}.$ Then
\begin{eqnarray*}
\left\vert\sum_{j=2^{2\alpha_k}}^{2^{2\alpha _k+1}}\frac{D_j}{2^{2\alpha _k+1}-j}
\right\vert 
&=&\left\vert\sum_{j=2^{2\alpha _{k}-1}+1}^{2^{2\alpha _{k}}-1}\frac{
	w_{2j+1}}{2^{2\alpha _{k}+1}-2j-1}\right\vert  \\
&=&\left\vert \sum_{j=2^{2\alpha _{k}-1}+1}^{2^{2\alpha _{k}}-1}\frac{w_{2j}}{2^{2\alpha _{k}+1}-2j-1}\right\vert  \geq \frac{1}{3}.
\end{eqnarray*}
\end{lemma}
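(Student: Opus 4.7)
I would first reduce the Dirichlet-kernel sum to a sum of Walsh functions by exploiting $x \in I_2(e_0+e_1) \subset I_0 \setminus I_1$, then pair adjacent indices to isolate one dominant contribution of size $\tfrac23$, and finally control the residual by an alternating-series estimate.

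\emph{The two equalities.} Since $x \in I_0 \setminus I_1$, formula \eqref{1dn} gives $D_{2^s}(x)=0$ for every $s \geq 1$. Substituting into \eqref{2dn} leaves only the $s=0$ term and produces $D_j(x) = -j_0\,w_j(x)$, so $D_j(x) = 0$ for $j$ even and $D_j(x) = -w_j(x)$ for $j$ odd. Restricting the leftmost sum to odd $j$ and reindexing by $j = 2l+1$ gives the middle expression. For the second equality, the definition of the Walsh system shows that whenever two integers have disjoint binary supports their associated Walsh functions multiply; in particular $w_{2l+1}(x) = w_{2l}(x)\,w_1(x)$, and $w_1(x) = r_0(x) = -1$ on $I_0 \setminus I_1$, so the two absolute values agree.

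\emph{The lower bound.} Set $\beta := 2\alpha_k$ and let $S$ denote the last sum. The substitution $l = 2^{\beta-1}+k$, combined with $w_{2l}(x) = w_{2^\beta}(x)\,w_{2k}(x) = r_\beta(x)\,w_{2k}(x)$, yields
\[
|S| = \Bigl|\sum_{k=1}^{2^{\beta-1}-1} \frac{w_{2k}(x)}{2^\beta-2k-1}\Bigr|.
\]
Pair consecutive indices $(k,k+1) = (2m,2m+1)$ for $m = 1,\ldots,2^{\beta-2}-1$, leaving $k = 1$ unpaired. Because $x_1 = 1$ forces $w_2(x) = -1$, one has $w_{4m+2}(x) = -w_{4m}(x)$, so each pair collapses to
\[
\frac{-2\,w_{4m}(x)}{(2^\beta-4m-1)(2^\beta-4m-3)}.
\]
The pair $m = 2^{\beta-2}-1$ has denominator product $1\cdot 3$, contributing $\pm\tfrac23$; the unpaired $k=1$ contributes $-1/(2^\beta-3)$; and the remaining pairs, reindexed by $l = 2^{\beta-2}-m$, sum in magnitude to at most $\sum_{l\geq 2}\bigl(\tfrac{1}{4l-3}-\tfrac{1}{4l-1}\bigr) < \tfrac15$ by the alternating-series bound $\tfrac15 - \tfrac17 + \tfrac19 - \cdots < \tfrac15$. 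Hence $|S| \geq \tfrac23 - \tfrac{1}{2^\beta-3} - \tfrac15 > \tfrac13$ whenever $\beta \geq 4$; the cases $\beta = 3$ (no non-dominant pairs, giving $|S| \geq \tfrac{7}{15}$) and $\beta = 2$ ($|S| = |w_2(x)| = 1$) are immediate.

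\emph{Main obstacle.} The delicate step is the final numerical estimate: the dominant pair yields only $\tfrac23$, while a naive termwise bound would produce a harmonic sum of order $\beta\log 2$ and fail. It is precisely the exact cancellation $\tfrac{1}{2^\beta-4m-1} - \tfrac{1}{2^\beta-4m-3}$ produced by the pairing that shrinks the residual enough to leave a positive $\tfrac13$ margin.
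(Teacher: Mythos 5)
Your proof is correct and follows essentially the same route as the paper's: reduce to odd indices via \eqref{1dn}--\eqref{2dn}, use $w_2(x)=-1$ on $I_2(e_0+e_1)$ to pair the indices $4m$ and $4m+2$, isolate the dominant pair with denominators $1$ and $3$ contributing $2/3$, and bound the residual telescoping differences. The only difference is bookkeeping: you bound the tail by $1/5$ via the alternating-series argument (plus an explicit treatment of the unpaired smallest term and the small-$\beta$ cases), where the paper bounds it by $1/4$ via comparison with $\sum 1/k^2$; both yield a margin above $1/3$.
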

\begin{proof}
Let $x\in I_{2}(e_{0}+e_{1})\in I_{0}\backslash I_{1}.$ According to (\ref%
{1dn}) and (\ref{2dn}) we can conclude that
\begin{equation*}
D_{j}\left( x\right) =\left\{ 
\begin{array}{ll}
w_{j}, & \,\text{if\thinspace \thinspace \thinspace }j\ \ \text{is odd
	number,} \\ 
0, & \text{if}\,\,j\ \ \text{is even number,}%
\end{array}%
\right.
\end{equation*}
and 
\begin{eqnarray*}
&&\sum_{j=2^{2\alpha_k}}^{2^{2\alpha _k+1}-1}\frac{D_j}{2^{2\alpha _k+1}-j}\\
	&=&\sum_{j=2^{2\alpha _{k}-1}}^{2^{2\alpha _{k}}-1}\frac{%
		w_{2j+1}}{2^{2\alpha _{k}+1}-2j-1}=w_1\sum_{j=2^{2\alpha _{k}-1}}^{2^{2\alpha _{k}}-1}\frac{w_{2j}}{%
		2^{2\alpha _{k}+1}-2j-1}.
\end{eqnarray*}

Since
\begin{eqnarray*}
	&&\sum_{j=2^{2\alpha _{k}-2}+1}^{2^{2\alpha _{k}-1}-1}\left\vert \frac{1}{%
		2^{2\alpha _{k}+1}-4j+3}-\frac{1}{2^{2\alpha _{k}+1}-4j+1}\right\vert \\
	&=&\sum_{j=2^{2\alpha _{k}-2}+1}^{2^{2\alpha _{k}-1}-1}\frac{2}{(2^{2\alpha
			_{k}+1}-4j+3)(2^{2\alpha _{k}+1}-4j+1)} \\
	&\leq &\sum_{j=2^{2\alpha _{k}-2}+1}^{2^{2\alpha _{k}-1}-1}\frac{2}{%
		(2^{2\alpha _{k}+1}-4j)(2^{2\alpha _{k}+1}-4j)} \\
	&\leq &\frac{1}{8}\sum_{j=2^{2\alpha _{k}-2}+1}^{2^{2\alpha _{k}-1}-1}\frac{1%
	}{(2^{2\alpha _{k}-1}-j)(2^{2\alpha _{k}-1}-j)} \\
&\leq &\frac{1}{8}\sum_{k=1}^{\infty }\frac{1}{k^2}\leq \frac{1}{8}+ \frac{1}{8}\sum_{k=2}^{\infty }\frac{1}{k^2} \\
&\leq&\frac{1}{8}+\frac{1}{8} \sum_{k=2}^{\infty}\left(\frac{1}{k-1}-\frac{1}{k}\right) \leq \frac{1}{8}+\frac{1}{8}=\frac{1}{4},
\end{eqnarray*}
according to 
$$
w_{4k+2}=w_{2}w_{4k}=-w_{4k}, \ \ \text{for} \ \ x\in I_{2}(e_{0}+e_{1}),
$$
we can conclude that
\begin{eqnarray*}
&&\left\vert w_{2^{2\alpha_{k}+1}-2}+\frac{w_{2^{2\alpha_{k}+1}-4}}{3}+\sum_{j=2^{2\alpha _{k}-1}+1}^{2^{2\alpha _{k}}-1}\frac{w_{2j}}{2^{2\alpha _{k}+1}-2j-1}\right\vert   \\
&=& \left\vert \frac{w_{2^{2\alpha_{k}+1}-4}}{3}-
w_{2^{2\alpha _{k}+1}-4}+\sum_{j=2^{2\alpha_{k}-2}+1}^{2^{2\alpha _{k}-1}}\left( \frac{w_{4j-4}}{2^{2\alpha _{k}+1}-4j+3}+\frac{w_{4j-2}}{2^{2\alpha _{k}+1}-4j+1}\right) \right\vert  \\
&=&\left\vert \frac{2w_{2^{2\alpha _{k}+1}-4}}{3} -\sum_{j=2^{2\alpha_{k}-2}+1}^{2^{2\alpha _{k}-1}}\left( \frac{w_{4j-4}}{2^{2\alpha _{k}+1}-4j+3}-\frac{w_{4j-4}}{2^{2\alpha _{k}+1}-4j+1}\right) \right\vert  \\
&\geq & \frac{2}{3}-\sum_{j=2^{2\alpha _{k}-2}+1}^{2^{2\alpha
			_{k}-1}}\left\vert \frac{1}{2^{2\alpha _{k}+1}-4j+3}-\frac{1}{2^{2\alpha
			_{k}+1}-4j+1}\right\vert  \\
	&\geq & \frac{2}{3}-\frac{1}{4}=\frac{5}{12}\geq \frac{1}{3}.
\end{eqnarray*}

The proof is complete.
\end{proof}

\section{Main results}

Our first main result reads:
\begin{theorem}\label{Corollaryconv4} Let $p\geq 1$ and $f\in L_p(G)$. Then
\begin{eqnarray}\label{normconverg}
\Vert L_{2^n} f-f\Vert_p \to 0 \ \ \text{as}\ \ n\to \infty.
\end{eqnarray}
Moreover,
	\begin{equation}\label{jojo}
	\underset{n\rightarrow \infty }{\lim }L_{2^n}f(x)=f(x),
	\end{equation}
	for all Lebesgue points of $f$.
\end{theorem}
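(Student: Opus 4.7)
\ The theorem splits into the norm convergence \eqref{normconverg} and the pointwise statement \eqref{jojo}, which I would prove in this order. For \eqref{normconverg} the strategy is uniform boundedness plus density: the convolution inequality \eqref{covstrongaaaj} together with $\Vert P_{2^n}\Vert_1\le c$ from \eqref{1.71alphaT2jj} gives $\Vert L_{2^n}f\Vert_p\le c\Vert f\Vert_p$ uniformly in $n$ for every $f\in L_p(G)$, so it suffices to verify convergence on the dense subspace of Walsh polynomials.  If $P$ has Walsh spectrum contained in $\{0,1,\dots,N-1\}$ and $2^n\ge N$, then $S_kP=P$ for every $k\ge N$; splitting the sum defining $L_{2^n}P$ at $k=N$ yields
\begin{equation*}
L_{2^n}P=\frac{1}{l_{2^n}}\sum_{k=0}^{N-1}\frac{S_kP}{2^n-k}+\frac{l_{2^n-N}}{l_{2^n}}\,P,
\end{equation*}
whose first summand is $O(1/l_{2^n})$ in $L_\infty$ and whose second summand converges to $P$ since $l_{2^n-N}/l_{2^n}\to 1$.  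Combining this uniform polynomial convergence with density and the uniform bound gives \eqref{normconverg}.

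For \eqref{jojo}, I would plug the decomposition \eqref{1.71alphaT2j} into the convolution representation of $L_{2^n}f$ to obtain
\begin{equation*}
L_{2^n}f(x)=S_{2^n}f(x)-\int_G f(x+u)\psi_{2^n-1}(u)Y_{2^n}(u)\,d\mu(u).
\end{equation*}
Since \eqref{s2nae} tells us that $S_{2^n}f(x)\to f(x)$ at every Lebesgue point, the whole problem reduces to showing that the remainder integral tends to $0$ at every Lebesgue point.  Split $G=I_n\cup\overline{I_n}$: using \eqref{1dn} and \eqref{2dn} one computes $D_k(u)=k$ on $I_n$ for every $k\le 2^n$, hence $Y_{2^n}(u)=2^n/l_{2^n}$ and $\psi_{2^n-1}(u)=1$ on $I_n$, so that the contribution from $I_n$ equals $S_{2^n}f(x)/l_{2^n}$, which tends to $0$ because $l_{2^n}\to\infty$ and $S_{2^n}f(x)$ is bounded at any Lebesgue point.

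The main obstacle is the integral over $\overline{I_n}=\bigcup_{s=0}^{n-1}(I_s\setminus I_{s+1})$, where neither $\psi_{2^n-1}$ nor $Y_{2^n}$ is individually small and the convergence depends on the delicate cancellation already indicated by Lemma~2.  My plan is to treat the annuli separately: on each $I_s\setminus I_{s+1}$ expand $Y_{2^n}$ via \eqref{2dn} and use the classical estimate $|D_k(u)|\le c\cdot 2^s$ valid there to reduce the slice integral to an expression involving partial sums $S_m f$ evaluated at points of the form $x+u$, which are controlled by the dyadic maximal function of $f$ near $x$ and hence finite at Lebesgue points.  An Abel-type partial summation against the logarithmic weights $1/(k\,l_{2^n})$, of the same flavour as the one carried out in the proof of Lemma~2, should then give a bound of the form $c\,\bigl(\sup_{k\ge 2^s}|S_kf(x)-f(x)|+\varepsilon_n\bigr)$ with $\varepsilon_n\to 0$; summing in $s=0,\dots,n-1$ and invoking the Lebesgue-point hypothesis produces the required vanishing.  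The precise matching between the oscillation of $\psi_{2^n-1}$ and the logarithmic weighting in $Y_{2^n}$ on each annulus is where the bulk of the technical work lies.
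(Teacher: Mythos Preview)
Your treatment of \eqref{normconverg} via uniform boundedness plus density of Walsh polynomials is correct and is essentially the paper's argument, only spelled out in more detail.

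For \eqref{jojo}, you and the paper start identically: decompose $L_{2^n}f(x)=S_{2^n}f(x)-II$ via \eqref{1.71alphaT2j}, and invoke \eqref{s2nae} for the first term. The divergence is in how the remainder
\[
II=\int_G f(x+u)\,\psi_{2^n-1}(u)\,Y_{2^n}(u)\,d\mu(u)
\]
is handled. The paper does \emph{not} split $G$ into $I_n$ and annuli. Instead it returns to the form $II=\int_G f(t)\,\psi_{2^n-1}(x+t)\,Y_{2^n}(x+t)\,d\mu(t)$, uses the multiplicativity \eqref{vilenkin} to factor $\psi_{2^n-1}(x+t)=\psi_{2^n-1}(x)\,\psi_{2^n-1}(t)$, and recognises the remaining integral as the $(2^n-1)$-st Walsh--Fourier coefficient of $t\mapsto f(t)\,Y_{2^n}(x+t)$. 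An appeal to the Riemann--Lebesgue lemma then gives $II\to 0$ in one line, with no annulus analysis whatsoever.

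Your plan for $\overline{I_n}$ is only a sketch: the Abel summation and the ``precise matching between the oscillation of $\psi_{2^n-1}$ and the logarithmic weighting'' are not carried out, and Lemma~2 (a lower-bound divergence estimate used for Theorem~2, not a convergence tool) is not the right ingredient here. It is not evident that your slice-by-slice bounds would sum to something vanishing, since on $I_s\setminus I_{s+1}$ the kernel $Y_{2^n}$ is of order $2^s/l_{2^n}$ and there are $n$ slices to add. So the missing idea is the Riemann--Lebesgue shortcut; without it the argument is substantially incomplete. (To be fair, the paper's own invocation of Riemann--Lebesgue is terse, since the integrand $f(t)\,Y_{2^n}(x+t)$ varies with $n$; making that step fully rigorous needs a little more care than the paper provides.)
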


\begin{proof}\label{lemma0nnT121}Let $n\in \mathbb{N}$.  By combining \eqref{covstrongaaaj}  and \eqref{1.71alphaT2jj} we immediately get
	
$$\Vert L_{2^n} f\Vert_p \leq c_p\Vert f\Vert_p \ \ \text{for all}\ \ n\in \mathbb{N},$$
which immediately implies \eqref{normconverg}.

To prove a.e convergence we use identity \eqref{1.71alphaT2j} to obtain that
\begin{eqnarray}\label{jojo1}
L_{2^n}f\left(x\right)&=&\underset{G}{\int}f\left(t\right)P_{2^n}\left(x+t\right) d\mu\left(t\right)\\ \notag
&=&\underset{G}{\int}f\left(t\right)D_{2^n}\left(x+t\right)d\mu\left(t\right)\\ \notag
&-&\underset{G}{\int}f\left(t\right)w_{2^n-1}(x+t){Y}_{2^n}(x+t)\\	\notag
&:=&I-II. 
\end{eqnarray}

By applying  \eqref{s2nae} we can conclude that

\begin{eqnarray} \label{jojo2}
I=S_{2^n}f(x)\to f(x)
\end{eqnarray}
for all Lebesgue points of $f\in L_p(G)$. Moreover, by using \eqref{vilenkin} we find that

\begin{eqnarray*}
II=\psi_{2^n-1}(x)\underset{G}{\int}f\left(t\right){Y}_{2^n}(x+t){\psi}_{2^n-1}(t)d(t).
\end{eqnarray*}

In view of \eqref{1.71alphaT2jj} we see that  

$$f\left(t\right){Y}_{2^n}(x+t)\in L_p \ \ \text{ where} \ \  p\geq  1 \ \ \text{for any } \ \ x\in G, $$
and also note that $II$ describes the Fourier coefficients of an integrable function. Hence, according to the Riemann-Lebesgue Lemma it vanishes as $n\to \infty,$ i.e.
\begin{eqnarray}\label{jojo3}
II\to 0 \ \ \text{for any } \ \ x\in G, \ \  n\to \infty.
\end{eqnarray}
The proof of \eqref{jojo} follows by just combining \eqref{jojo1}-\eqref{jojo3}.

The proof is complete.

\end{proof}

Our next main result is the following answer of question 1:
\begin{theorem}
Let $0<p<1.$ Then there exists a martingale $f\in H_{p}$ such that 
\begin{equation*}
\sup_{n\in \mathbb{N}} \left\Vert L_{2^n}f\right\Vert_{weak-L_p}=\infty.
\end{equation*}
\end{theorem}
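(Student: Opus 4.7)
The plan is to follow the strategy of \cite{BPTW,LPTT} and construct an explicit atomic martingale tailored to Lemma 2. First I would pick a strictly increasing sequence of positive integers $\{\alpha_k\}_{k\ge 1}$ whose growth rate (depending on $p$) will be fixed at the end, and set
\begin{equation*}
a_k := 2^{2\alpha_k(1/p-1)}\bigl(D_{2^{2\alpha_k+1}}-D_{2^{2\alpha_k}}\bigr).
\end{equation*}
Using \eqref{1dn} one checks that $a_k$ is supported in $I_{2\alpha_k}$, has zero mean and satisfies $\Vert a_k\Vert_\infty = 2^{2\alpha_k/p}=\mu(I_{2\alpha_k})^{-1/p}$, so $a_k$ is a $p$-atom whose Walsh-Fourier spectrum is the dyadic block $[2^{2\alpha_k},2^{2\alpha_k+1})$ with constant coefficients $2^{2\alpha_k(1/p-1)}$. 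Choosing weights $\mu_k := k^{-2/p}$ makes $\sum_k |\mu_k|^p = \sum_k k^{-2}<\infty$, so if we set $f := \sum_{k\ge 1} \mu_k\, a_k$ (as the limit of its $\digamma_n$-martingale partial sums), Lemma \ref{Weisz} gives $f\in H_p$.

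Next I isolate the action of $L_{N_k}$ on each atom at the level $N_k:=2^{2\alpha_k+1}$. Because the spectrum of $a_\ell$ starts above $N_k$ for $\ell>k$, we have $S_j a_\ell\equiv 0$ for every $j<N_k$, hence $L_{N_k}a_\ell=0$. For $\ell<k$, the convolution bound \eqref{covstrongaaaj} together with the uniform estimate \eqref{1.71alphaT2jj} yields $\Vert L_{N_k}a_\ell\Vert_\infty \le c\,\Vert a_\ell\Vert_\infty = c\,2^{2\alpha_\ell/p}$. For $\ell=k$, the spectral support of $a_k$ gives $S_j a_k=2^{2\alpha_k(1/p-1)}(D_j-D_{2^{2\alpha_k}})$ for $j\in (2^{2\alpha_k},N_k]$ and $S_j a_k=0$ otherwise, so
\begin{equation*}
L_{N_k}a_k(x) = \frac{2^{2\alpha_k(1/p-1)}}{l_{N_k}}\sum_{j=2^{2\alpha_k}+1}^{N_k-1}\frac{D_j(x)-D_{2^{2\alpha_k}}(x)}{N_k-j}.
\end{equation*}
On the set $E:=I_2(e_0+e_1)\subset I_0\setminus I_1$, formula \eqref{1dn} forces $D_{2^{2\alpha_k}}\equiv 0$, so the displayed sum reduces to precisely the one bounded from below in Lemma 2, yielding
\begin{equation*}
|L_{N_k}a_k(x)| \;\ge\; \frac{2^{2\alpha_k(1/p-1)}}{3\,l_{N_k}} \;\ge\; \frac{c\,2^{2\alpha_k(1/p-1)}}{\alpha_k}, \qquad x\in E.
\end{equation*}

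Combining these estimates through the triangle inequality gives, for $x\in E$,
\begin{equation*}
|L_{N_k}f(x)| \;\ge\; \frac{c\,\mu_k\,2^{2\alpha_k(1/p-1)}}{\alpha_k}\,-\,c\sum_{\ell<k}\mu_\ell\,2^{2\alpha_\ell/p}.
\end{equation*}
Since $1/p-1>0$ when $0<p<1$, taking $\alpha_k\sim C^k$ with any $C>1/(1-p)$ makes the second term negligible compared with the first, while the first itself tends to $+\infty$ with $k$. Hence some $\Lambda_k\to\infty$ satisfies $|L_{N_k}f(x)|\ge\Lambda_k$ for every $x\in E$, and since $\mu(E)=1/4$,
\begin{equation*}
\sup_{n\in \mathbb{N}}\Vert L_{2^n}f\Vert_{weak-L_p}^p \;\ge\; \Vert L_{N_k}f\Vert_{weak-L_p}^p \;\ge\; \Lambda_k^p\,\mu(E) \;=\; \tfrac{1}{4}\Lambda_k^p \longrightarrow \infty,
\end{equation*}
which is the required blowup.

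The main obstacle is balancing three constraints at once: $\sum|\mu_k|^p<\infty$ (to get $f\in H_p$), $\mu_k\,2^{2\alpha_k(1/p-1)}/\alpha_k\to\infty$ (to force divergence), and $\sum_{\ell<k}\mu_\ell\,2^{2\alpha_\ell/p}$ dominated by the principal term (so that the lower bound survives after absorbing the smaller-index atoms). Once one compares the two block exponents $2\alpha_k/p$ and $2\alpha_k(1/p-1)$ this is routine lacunarity arithmetic; the genuinely new ingredient is the pointwise lower bound of Lemma 2, which precisely compensates for the factor $l_{N_k}^{-1}$ of order $1/\alpha_k$ that would otherwise have been fatal.
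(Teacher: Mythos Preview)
Your proposal is correct and follows essentially the same approach as the paper: the same atoms $a_k=2^{2\alpha_k(1/p-1)}(D_{2^{2\alpha_k+1}}-D_{2^{2\alpha_k}})$, the same test set $I_2(e_0+e_1)$, and the same appeal to Lemma~2 for the main pointwise lower bound. The only organisational difference is that you control the lower-index atoms through the kernel bound $\Vert P_{2^n}\Vert_1\le c$ applied to each $a_\ell$ separately (postponing the choice of lacunarity of $\{\alpha_k\}$ to the end), whereas the paper estimates $|S_jf|$ coefficient-wise for $j<2^{2\alpha_k}$ under the a~priori growth conditions (\ref{4})--(\ref{5}); this is a mild streamlining rather than a different method.
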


\begin{proof}
Let $\left\{ \alpha _{k}:k\in \mathbb{N}\right\} $ be an increasing sequence of the
positive integers such that

\qquad 
\begin{equation}
\sum_{k=0}^{\infty }\alpha _{k}^{-p/2}<\infty ,  \label{3}
\end{equation}

\begin{equation}
\sum_{\eta =0}^{k-1}\frac{\left( 2^{2\alpha _{\eta }}\right) ^{1/p}}{\sqrt{
\alpha _{\eta }}}<\frac{\left( 2^{2\alpha _{k}}\right) ^{1/p}}{\sqrt{\alpha
_{k}}},  \label{4}
\end{equation}

\begin{equation}
\frac{\left( 2^{2\alpha _{k-1}}\right) ^{1/p}}{\sqrt{\alpha _{k-1}}}<\frac{%
2^{2\alpha _{k}-8}}{\alpha _{k}^{1/2}l_{2^{2\alpha _{k}+1}}}.  \label{5}
\end{equation}

We note that such an increasing sequence $\left\{ \alpha _{k}:k\in \mathbb{N}%
\right\} $ which satisfies conditions (\ref{3})-(\ref{5}) can obviously be constructed.

Let 
\begin{equation*}
f^{\left( n\right) }\left( x\right) :=\sum_{\left\{ k;\text{ }2\alpha
_{k}<n\right\} }\lambda _{k}a_{k},
\end{equation*}
where 
\begin{equation*}
\lambda _{k}=\frac{1}{\sqrt{\alpha _{k}}}
\end{equation*}
and
\begin{equation*}
a_{k}={2^{2\alpha _{k}(1/p-1)}}\left( D_{2^{2\alpha _{k}+1}}-D_{2^{2\alpha
_{k}}}\right) .
\end{equation*}

From (\ref{3}) and Lemma \ref{Weisz} we can conclude that $f=\left( f^{\left(
n\right) },n\in \mathbb{N}\right) \in H_{p}.$

It is easy to show that
\begin{equation}
\widehat{f}(j)=\left\{ 
\begin{array}{l}
\frac{2^{2\alpha _{k}(1/p-1)}}{\sqrt{\alpha _{k}}},\,\,\text{ if \thinspace
\thinspace }j\in \left\{ 2^{2\alpha _{k}},...,\text{ ~}2^{2\alpha
_{k}+1}-1\right\} ,\text{ }k\in \mathbb{N}, \\ 
0,\text{ \thinspace \thinspace \thinspace if \thinspace \thinspace
\thinspace }j\notin \bigcup\limits_{k=1}^{\infty }\left\{ 2^{2\alpha
_{k}},...,\text{ ~}2^{2\alpha _{k}+1}-1\right\}\text{ .}
\end{array}%
\right.  \label{8}
\end{equation}

Moreover,
\begin{eqnarray}
&&L_{2^{2\alpha _{k}+1}}f  \label{10a} \\
&=&\frac{1}{l_{2^{2\alpha _{k}+1}}}\sum_{j=1}^{2^{2\alpha _{k}}-1}\frac{%
S_{j}f}{2^{2\alpha _{k}+1}-j}+\frac{1}{l_{2^{2\alpha _{k}+1}}}%
\sum_{j=2^{2\alpha _{k}}}^{2^{2\alpha _{k}+1}-1}\frac{S_{j}f}{{2^{2\alpha
_{k}+1}-j}}  \notag \\
&:=&I+II.  \notag
\end{eqnarray}

Let $j<2^{2\alpha _{k}}.$ Then from (\ref{4}), (\ref{5}) and (\ref{8}) we
have
\begin{eqnarray*}
&&\left\vert S_{j}f\left( x\right) \right\vert \\
&\leq &\sum_{\eta =0}^{k-1}\sum_{v=2^{2\alpha _{\eta }}}^{2^{2\alpha _{\eta
}+1}-1}\left\vert \widehat{f}(v)\right\vert \leq \sum_{\eta
=0}^{k-1}\sum_{v=2^{2\alpha _{\eta }}}^{2^{2\alpha _{\eta }+1}-1}\frac{%
2^{2\alpha _{\eta }(1/p-1)}}{\sqrt{\alpha _{\eta }}} \\
&\leq &\sum_{\eta =0}^{k-1}\frac{2^{2\alpha _{\eta }/p}}{\sqrt{\alpha _{\eta
}}}\leq \frac{2^{2\alpha _{k-1}/p+1}}{\sqrt{\alpha _{k-1}}}<\frac{2^{2\alpha
_{k}-4}}{\alpha _{k}^{1/2}l_{2^{2\alpha _{k}+1}}}.
\end{eqnarray*}

Consequently,
\begin{eqnarray}
\left\vert I\right\vert &\leq &\frac{1}{l_{2^{2\alpha _{k}+1}}}\underset{j=1}%
{\overset{2^{2\alpha _{k}}-1}{\sum }}\frac{\left\vert S_{j}f\left( x\right)
\right\vert }{2^{2\alpha _{k}+1}-j}  \label{11a} \\
&\leq &\frac{1}{l_{2^{2\alpha _{k}+1}}}\frac{2^{2\alpha _{k-1}/p}}{\sqrt{%
\alpha _{k-1}}}\sum_{j=1}^{M_{2\alpha _{k}+1}-1}\frac{1}{j}\leq\frac{2^{2\alpha _{k-1}/p}}{\sqrt{\alpha _{k-1}}}.  \notag
\end{eqnarray}

Let $2^{2\alpha _{k}}\leq j\leq 2^{2\alpha_{k}+1}-1.$ Then we have the
following equality

\begin{eqnarray*}
S_{j}f &=&\sum_{\eta =0}^{k-1}\sum_{v=2^{2\alpha _{\eta }}}^{2^{2\alpha
_{\eta }+1}-1}\widehat{f}(v)w_{v}+\sum_{v=2^{2\alpha _{k}}}^{j-1}\widehat{f}%
(v)w_{v} \\
&=&\sum_{\eta =0}^{k-1}\frac{2^{{2\alpha _{\eta }}\left( 1/p-1\right) }}{%
\sqrt{\alpha _{\eta }}}\left( D_{2^{2\alpha _{\eta }+1}}-D_{2^{2\alpha
_{\eta }}}\right) \\
&&+\frac{2^{{2\alpha _{k}}\left( 1/p-1\right) }}{\sqrt{\alpha _{k}}}\left(
D_{j}-D_{2^{{2\alpha _{k}}}}\right) .
\end{eqnarray*}

This gives that
\begin{equation}
II=\frac{1}{l_{2^{2\alpha _{k}+1}}}\underset{j=2^{2\alpha _{k}}}{\overset{%
2^{2\alpha _{k}+1}}{\sum }}\ \frac{1}{2^{2\alpha _{k}+1}-j}\left( \sum_{\eta
=0}^{k-1}\frac{2^{2\alpha _{\eta }\left( 1/p-1\right) }}{\sqrt{\alpha _{\eta
}}}\left( D_{2^{2\alpha _{\eta }+1}}-D_{2^{_{2\alpha _{\eta }}}}\right)
\right)  \label{12a}
\end{equation}
\begin{eqnarray*}
&+&\frac{1}{l_{2^{2\alpha _{k}+1}}}\frac{2^{2\alpha _{k}\left( 1/p-1\right) }}{%
\sqrt{\alpha_k}}\sum_{j=2^{2\alpha_k}}^{2^{2\alpha _{k}+1}-1}\frac{
\left(D_j-D_{2^{2\alpha_k}}\right)}{2^{2\alpha _{k}+1}-j}
:=II_{1}+II_{2}.
\end{eqnarray*}

Let $x\in I_{2}(e_0+e_1)\in I_0\backslash I_1.$ Since $\alpha _{0}\geq 1$ we obtain that $2\alpha_k\geq 2,$ for all $k\in \mathbb{N}$ and if we apply (\ref{1dn}) we get
that $D_{2^{2\alpha_k}}=0,$ 
\begin{equation}
II_{1}=0  \label{13a}
\end{equation}
and 
\begin{eqnarray*}
&&II_{2}=\frac{1}{l_{2^{2\alpha _{k}+1}}}\frac{2^{2\alpha _{k}(1/p-1)}}{%
\sqrt{\alpha _{k}}}\sum_{j=2^{2\alpha _{k}-1}+1}^{2^{2\alpha _{k}}-1}\frac{%
w_{2j+1}}{2^{2\alpha _{k}+1}-2j-1} \\
&=&\frac{1}{l_{2^{2\alpha _{k}+1}}}\frac{2^{2\alpha _{k}(1/p-1)}w_{1}}{\sqrt{%
\alpha _{k}}}\sum_{j=2^{2\alpha _{k}-1}+1}^{2^{2\alpha _{k}}-1}\frac{w_{2j}}{%
2^{2\alpha _{k}+1}-2j-1}.
\end{eqnarray*}

By now using Lemma 2 we can conclude that

\begin{equation} \label{14a}
\left\vert II_{2}\right\vert \geq \frac{1}{3}\frac{1}{l_{2^{2\alpha _{k}+1}}}\frac{%
2^{2\alpha _{k}(1/p-1)}}{\sqrt{\alpha _{k}}}\geq \frac{1}{l_{2^{2\alpha _{k}+1}}%
}\frac{2^{2\alpha _{k}(1/p-1)-1}}{\sqrt{\alpha _{k}}}.
\end{equation}%

By combining (\ref{5}), (\ref{10a})-(\ref{14a}) for $x\in I_{2}(e_{0}+e_{1})$
and $0<p<1$ we have that

\begin{eqnarray*}
\left\vert L_{2^{2\alpha_k+1}}f\left(x\right)\right\vert &\geq& II_{2}-II_{1}-I \\
&\geq &\frac{1}{l_{2^{2\alpha _{k}+1}}}\frac{2^{2\alpha _{k}(1/p-1)-2}}
{\sqrt{\alpha _{k}}}-\frac{1}{l_{2^{2\alpha _{k}+1}}}\frac{2^{2\alpha
_{k}(1/p-1)-3}}{\sqrt{\alpha _{k}}} \\
&\geq &\frac{1}{l_{2^{2\alpha _k+1}}}\frac{2^{2\alpha_k(1/p-1)-3}}
{\sqrt{\alpha_k}}\geq \frac{2^{2\alpha _{k}(1/p-1)-3}}{\left(\ln 2^{2\alpha_k+1}+1\right)\sqrt{\alpha_k}} \\
&\geq& \frac{2^{2\alpha _{k}(1/p-1)-3}}{( 4\alpha _{k}+1)\sqrt{\alpha _{k}}}\\
&\geq& \frac{2^{2\alpha _{k}(1/p-1)-6}}{\alpha^{3/2}_k}.
\end{eqnarray*}

Hence, we can conclude that
\begin{eqnarray*}
&&\left\Vert L_{q_{\alpha _{k}}^{s}}f\right\Vert _{weak-L_{p}}  \\
&\geq &\frac{2^{2\alpha _{k}(1/p-1)-6}}{\alpha^{3/2}_k}\mu \left\{ x\in G:\left\vert L_{2^{2\alpha
_{k}+1}}f\right\vert \geq \frac{2^{2\alpha _{k}(1/p-1)-6}}{\alpha^{3/2}_k}\right\} ^{1/p} \\
&\geq &\frac{2^{2\alpha _{k}(1/p-1)-6}}{\alpha^{3/2}_k}\mu \left\{ x\in I_{2}(e_{0}+e_{1}):\left\vert
L_{2^{2\alpha _{k}+1}}f\right\vert \geq \frac{2^{2\alpha _{k}(1/p-1)-6}}{\alpha^{3/2}_k}\right\} ^{1/p} \\
&\geq &\frac{2^{2\alpha _{k}(1/p-1)-6}}{\alpha^{3/2}_k}(\mu \left( x\in I_{2}(e_{0}+e_{1})\right) )^{1/p} \\
&>&\frac{c2^{2\alpha _{k}(1/p-1)}}{\alpha _{k}^{3/2}}\rightarrow \infty ,%
\text{ \ \ as \ }k\rightarrow \infty .
\end{eqnarray*}

The proof is complete.
\end{proof}

\section{Open questions}

It is known (for details see e.g. the books \ \cite{sws}
and \cite{We1}) that the subsequence $\{S_{2^{n}}\}$ of the partial
sums is bounded from the martingale Hardy space $H_{p}$ to the Lebesgue
space $L_{p},$ for all $p>0.$ On the other hand, (see Tephnadze \cite{tep7}) there
exists a martingale $f\in H_{p}$ $\left( 0<p<1\right) ,$ such that

\begin{equation*}
\underset{n\in \mathbb{N}}{\sup }\left\Vert S_{2^{n}+1}f\right\Vert
_{weak-L_{p}}=\infty .
\end{equation*}%
However, Simon \cite{si11} proved that for all $f\in H_{p},$ there exists
an absolute constant $c_{p},$ depending only on $p,$ such that 

\begin{equation*}
\overset{\infty }{\underset{k=1}{\sum }}\frac{\left\Vert S_{k}f\right\Vert
	_{p}^{p}}{k^{2-p}}\leq c_{p}\left\Vert f\right\Vert _{H_{p}}^{p},\text{ \ \
	\ }\left( 0<p<1\right).
\end{equation*}%
In \cite{TT1} it was proved that for all $f\in H_{p},$ there exists
an absolute constant $c_{p},$ depending only on $p,$ such that 

\begin{equation*}
\overset{\infty }{\underset{k=1}{\sum }}\frac{\left\Vert L_{k}f\right\Vert
	_{p}^{p}}{k^{2-p}}\leq c_{p}\left\Vert f\right\Vert _{H_{p}}^{p},\text{ \ \
	\ }\left( 0<p<1\right).
\end{equation*}%

\textbf{Open Problem 1:} a) Let $f\in H_{p},$  where $0<p<1.$ Does there exist
an absolute constant $c_{p},$ depending only on $p,$ such that the following inequality holds:

\begin{equation*}
\overset{\infty }{\underset{k=1}{\sum }}\frac{\log^p k\left\Vert L_{k}f\right\Vert
	_{p}^{p}}{k^{2-p}}\leq c_{p}\left\Vert f\right\Vert _{H_{p}}^{p},\text{ \ \
	\ }\left( 0<p<1\right)?
\end{equation*}

b) For $0<p<1/2$ and any non-decreasing function $\Phi :\mathbb{N}\rightarrow \lbrack 1,\infty )$ satisfying the conditions 

$$\underset{n\rightarrow \infty }{\lim }\Phi \left( n\right) =+\infty,$$
is it possible to find a martingale $f\in H_p\left(G_m\right) $ such that

\begin{equation*}
\sum_{n=1}^{\infty}\frac{\log^p n \left\Vert L_nf\right\Vert_p^p\Phi\left(n\right)}{n^{2-p}} =\infty?
\end{equation*}

\textbf{Open Problem 2:} a) Let $f\in H_{p}$ where $0<p\leq 1$ and

\begin{equation*}
\omega _{H_p}\left( \frac{1}{M_{n}},f\right) =o\left( \frac{\log n}{2^{n(1/p-1)}\log^{2[p]}n}\right), \ \ \ \text{ as } \ \ \ n\rightarrow \infty.
\end{equation*}

Does the following convergence result hold:
\begin{equation*}
\left\Vert L_{k}f-f\right\Vert _{H_{p}}\rightarrow 0,\,\,\,\text{as \ }%
\,\,\,k\rightarrow \infty?
\end{equation*}

b) Let $0<p\leq 1.$ Does there exist a martingale $f\in H_{p}$, for which

\begin{equation*}
\omega _{H_{p}}\left( \frac{1}{2^{n}},f\right) =O\left( \frac{\log n}{2^{n(1/p-1)}\log^{2[p]}n}\right) ,\text{ \ as \  }n\rightarrow \infty
\end{equation*}%
and

\begin{equation*}
\left\Vert L_{k}f-f\right\Vert _{weak-L_{p}}\nrightarrow 0,\,\,\,\text{as\thinspace \thinspace \thinspace }k\rightarrow \infty ?
\end{equation*}

Reisz logarithmic means have better approximation properties then the Fej\'er means and if it is converging in some sense then the Fej\'er means converge in the same sense. Moreover, it has similar boundedness properties as the Fej\'er means  when we consider $(H_p,L_p)$ and $(H_p,weak-L_p)$ type inequalities for the  maximal operators of Riesz logarithmic means for $0<p\leq 1.$  In particular, it was proved in \cite{tep20} that the maximal operator of the Riesz logarithmic means of Vilenkin-Fourier series is bounded from the Hardy space $H_{1/2}(G_m)$ to the space $weak-L_{1/2}(G_m).$ It follows that it is bounded from the martingale Hardy space $H_p(G_m)$ to the space $L_p(G_m)$ when $p>1/2.$ On the other hand, (for details see \cite{tepthesis}) boundedness  does not hold from the martingale Hardy space $H_p(G_m)$ to the space $L_p(G_m)$ when $0<p\leq1/2.$ Moreover, (see \cite{tepthesis}) there exists a martingale $f\in H_{p}(G_m),$ where $0<p<1/2$ such that

\begin{equation*}
\sup_{n\in\mathbb{N}}\left\Vert R_nf\right\Vert_{p}=\infty.
\end{equation*}

In the endpoint case $p=1/2$ it is open problem to prove divergence of the Riesz logarithmic means:

\textbf{Open Problem 3:} Does there exist a martingale $f\in H_{1/2}(G_m),$  such that
\begin{equation*}
\sup_{n\in\mathbb{N}}\left\Vert R_nf\right\Vert_{1/2}=\infty?
\end{equation*}

According to estimate \eqref{sigmanjemala} it is interesting to consider boundedness of $R_{M_n}f$  from the martingale Hardy space $H_p(G_m)$ to the space $L_p(G_m)$ when $0<p\leq 1/2,$ so we pose the following:

\textbf{Open Problem 4:} Does there exist a martingale $f\in H_{p}(G_m),$ where $0<p<1/2,$ such that
\begin{equation*}
\sup_{n\in\mathbb{N}}\left\Vert R_{M_n}f\right\Vert_{p}=\infty?
\end{equation*}

If we prove this result we immediately find that the maximal operator 
$$\sup_{n\in \mathbb{N}}\left\vert R_{M_n}f \right\vert$$
is not bounded from the martingale Hardy space $H_p(G_m)$ to the space $L_p(G_m)$ when $0<p< 1/2.$


\textbf{ Availability of data and material}

Not applicable.

\textbf{Competing interests}

The authors declare that they have no competing interests.

\textbf{Funding}

The publication charges for this manuscript is supported by the publication fund at UiT The Arctic University of Norway under code IN-1096130. 

\textbf{Authors' contributions}

All the authors contributed equally and significantly in writing this paper.
All the authors read and approved the final manuscript.

\textbf{Acknowledgements}

The work of George Tephnadze was supported by Shota Rustaveli National
Science Foundation grant FR-19-676. The publication charges for this article have been funded by a grant from the publication fund of UiT The Arctic University of Norway. 

\textbf{Author details}

$^{1}$ The University of Georgia, School of science and technology, 77a Merab Kostava St, Tbilisi 0128, Georgia.

$^{2}$ UiT The Arctic University of Norway, P.O. Box 385, N-8505, Narvik, Norway and Department of Mathematics and Computer Science, Karlstad University, 65188 Karlstad, Sweden.

$^{3}$ Department of Computer Science and Computational Engineering, UiT - The Arctic University of Norway, P.O. Box 385, N-8505, Narvik, Norway.

$^{4}$ The University of Georgia, School of science and technology, 77a Merab Kostava St, Tbilisi 0128, Georgia.

%

\end{document}